\newtheorem{theorem}{Theorem}
\newtheorem{algorithm}{Algorithm}
\newtheorem{corollary}{Corollary}
\newtheorem{definition}{Definition}
\newtheorem{lemma}{Lemma}
\newtheorem{proposition}{Proposition}
  \def \cl{{\textup{cl}}}
\def \int{ {\textup{int}}}
\def \ri{ {\textup{ri}}}
  \def \argmin{{\textup{argmin}}}
\tikzset{
pics/realline/.style 2 args = {
    code = {\draw [thick]   (0,0) -- (6,0) node [right=2mm] {$\mathbb{R}$};
            \fill[black]    (1,0) circle (1mm) node[above=2mm] {$#1$}
                            (3,0) circle (1mm) node[above=2mm] {$#2$}
                            (6,0) circle (1mm) node[above=2mm] {$V_4$};
            \foreach \i [count=\j] in {0,0.5,1,1.5,3,4.5,6} 
                        \coordinate (-\j) at (\i,0);
    }}}
\begin{document}


\vspace{1.42cm}

\title[\small{\textit{Conic Reformulations for KL Divergence Constrained  DRO and Applications}}]
{\textbf{Conic Reformulations for Kullback-Leibler Divergence Constrained  Distributionally Robust Optimization  and Applications}}
\author[\small{\textit{B. Kocuk\\}}]
{\normalsize Burak Kocuk \vspace{0.58cm} 
\\ \small
Industrial Engineering Program, Faculty of Engineering and Natural Sciences, Sabanc{\i} University, \\ Istanbul, Turkey \\
Email: burak.kocuk@sabanciuniv.edu \vspace{0.1cm} \\
}
  
\thanks{Corresponding Author. Email: burak.kocuk@sabanciuniv.edu}

\begin{abstract}
In this paper, we consider a distributionally robust optimization (DRO) model in which the ambiguity set is defined as the set of distributions whose Kullback-Leibler (KL) divergence to an empirical distribution is bounded. Utilizing the fact that KL divergence is an exponential cone representable function, we obtain the robust counterpart of the KL divergence constrained  DRO problem as a dual exponential cone constrained program under mild assumptions on the underlying optimization problem. The resulting conic reformulation of the original optimization problem can be directly solved by a commercial conic programming solver. We specialize our generic formulation to two classical optimization problems, namely, the Newsvendor Problem and the Uncapacitated Facility Location Problem. Our computational study in an out-of-sample analysis shows that the solutions obtained via the DRO approach yield significantly better performance in terms of the dispersion of the cost realizations while the central tendency deteriorates only slightly compared to the solutions obtained by stochastic programming.

\vspace{7pt}
\noindent
\textbf{Keywords: }{Distributionally robust optimization, Stochastic programming, Conic programming, Newsvendor Problem, Uncapacitated Facility Location Problem}

\vspace{2pt}
\noindent
\textbf{AMS Classification:} 90C15, 90C25 , 90C90 
\end{abstract}

\maketitle

\begin{multicols*}{2}

\section{Introduction}

Decision making under uncertainty is one of the most challenging tasks in operations research. 
Two paradigms are predominantly used in the  literature to address  uncertainty: stochastic programming and robust optimization. In the classical stochastic programming \cite{BL2011, SDR2014}, a predefined set of scenarios (or samples) are determined, either taken directly from observed data or after fitting an appropriate distribution. Then, the objective function is replaced with an expectation taken with respect to the random elements, and constraints are copied for each scenario.  In addition to the  assumption about knowing the underlying distribution, this basic stochastic programming approach has some limitations: Firstly, the size of the resulting deterministic equivalent grows larger with the size of the scenarios. Secondly, an expectation may not be an appropriate performance measure. Thirdly, satisfying constraints for each scenario might be too restrictive. The respective remedies for these shortcomings are proposed such as sample average approximation to limit the problem size, risk-averse objective function for a more appropriate performance measure and chance constraints to allow constraint satisfaction with high probability. However, the implicit assumption of stochastic programming remains, which is the need to \textit{assume} a distribution by analyzing the data or \textit{fitting} one. Unfortunately, this step may not be performed satisfactorily in all cases.

In robust optimization \cite{BenTal2002, BEN2009, Bertsimas2011}, a predefined uncertainty set, which includes all possible values of the uncertain elements, is used. Then, the optimization is performed with the aim of optimizing with respect to  the worst possible realization from the uncertainty set. There are two main advantages of using robust optimization. Firstly, the decision maker does not need to make any assumptions about the distribution of the uncertain elements in the problem as opposed to the stochastic programming approach. Secondly,  the deterministic equivalent (or so-called the \textit{robust counterpart}) of the robust optimization problem typically has the same computational complexity as the deterministic version of the problem under reasonable assumptions on the uncertainty sets. On the other hand, the main disadvantage of the robust optimization approach is that depending on the construction of the uncertainty set, it might lead to overly conservative solutions, which might have poor performance in central tendency such as expectation.

Distributionally robust optimization (DRO) is a relatively new paradigm that aims to combine stochastic programming and robust optimization approaches. The main modeling assumption of DRO is that some \textit{partial}  information about the  distribution governing the underlying uncertainty is available, and the optimization is performed with respect to the worst distribution from an \textit{ambiguity set}, which contains all distributions consistent with this partial information. There are mainly two streams in the DRO literature based on how the ambiguity set is defined: moment-based and distance-based.

In moment-based DRO, ambiguity sets are defined as the set of  distributions whose first few moments are assumed to be known or constrained to lie in certain subsets. If certain structural properties hold for the ambiguity sets such as convexity (or conic representability), then tractable convex (or conic reformulations) can be obtained  \cite{Popescu2007, Delage2010, Wiesemann2014}. Recently,   chance constrained DRO problems also draw attention \cite{Zymler2013, Xie2018}. 
In distance-based DRO, ambiguity sets are defined as the set of distributions whose distance (or divergence) from a reference distribution is constrained. For  Wasserstein distance \cite{Gao2016, Mohajerin2018, Hanasusanto2018, Xie2019}  and $\phi-$divergence \cite{BenTal2013, Jiang2016, Lam2019} constrained DRO, tractable convex reformulations have been proposed. 

As summarized above, in many cases, tractable convex robust counterparts or reformulations can be obtained for robust and DR optimization problems. However, an even more special structure such as conic representability can be preferred whenever available. Especially, if the robust counterpart can be expressed as a conic program for which the underlying cone admits a self-concordant barrier, then efficient polynomial-time interior point methods can be applied directly \cite{NM1994}. This desired property holds for linear programs, second-order cone programs and semidefinite programs, which appear extensively in both robust and DR optimization literature. We note that the efficiency of the conic programming solvers specialized in these three problem classes has improved considerably.

There is  some recent interest in  conic programs for which the underlying cone is not self-dual, such as exponential cone. There are two main reasons: i) exponential cone has extensive expressive power that is useful to model optimization problems involving the exponential and logarithm functions (see e.g. \cite{Serrano2015}), and ii) a practical implementation of a primal-dual interior point method is developed \cite{Dahl2019} although its polynomial-time complexity has not proven yet. 
Our paper will exploit both the expressive power of the exponential cone and the practical implementation that can be used to solve the resulting optimization problem, as detailed below.

The Kullback-Leibler (KL) divergence \cite{KL1951} is a popular divergence measure in information theory that can be used to  quantify the divergence of one distribution from another (see Definition~\ref{def:KL divergence}) and we prove that it is exponential cone representable (see Definitions~\ref{def:exp cone} and \ref{def:conic repr func},  and Proposition~\ref{prop:D_KLisExpConeRepr}). 
 Although the robust counterpart of  KL divergence constrained DRO   is proven to be a tractable convex program \cite{Hu2013}, to the best of our knowledge, its exponential cone representability has not been exploited in the literature before. Also, its practical performance against stochastic programming have not been analyzed in detail except for a limited number of applications from power systems  \cite{Chen2018, Li2018}.

In this paper, we consider KL divergence constrained  DRO problems and propose their dual exponential cone constrained reformulation under the mild assumption of conic representability. This  allows us to solve the corresponding robust counterpart using a conic programming solver such as MOSEK \cite{MOSEK2020py}. We also present how the generic formulation can be specialized for two classical problems: Newsvendor  and Uncapacitated Facility Location. Although the DRO methodology has been applied to variations of these problems \cite{Hanasusanto2015, Natarajan2018, Lee2020, Lu2015, Santi2018, Basciftci2019}, to the best of our knowledge, their KL divergence constrained versions have not been studied in detail.
 Our computational results suggest that solutions obtained via a DR approach give slightly higher cost realizations when central tendencies such as mean and median are considered compared to  solutions obtained via stochastic programming in an out-of-sample analysis. However, the dispersion (measured by the standard deviation and range of the cost realizations) and the risk (measured by the average of worst cost realizations and the third quartile) metrics improve significantly with solutions obtained via a  DR approach.

The rest of the paper is organized as follows: In Section~\ref{sec:prelim}, we review basic concepts from convex analysis and probability theory which serve as the basis of our main result about conic reformulation of KL divergence constrained DRO problems  in Section~\ref{sec:main}. Then, we analyze two applications, namely, the Newsvendor Problem in Section~\ref{sec:newsvendor} and the Uncapacitated Facility Location Problem in Section~\ref{sec:ufl}, and present the results of our computational study. Finally, we conclude our paper in Section~\ref{sec:conc}.

 

\section{Preliminaries}
\label{sec:prelim}

Before stating our main result in Section~\ref{sec:main}, we will first review some important concepts from convex analysis in Section~\ref{sec:prelim convex} and probability theory in Section~\ref{sec:prelim prob}.

\subsection{Convex Analysis}
\label{sec:prelim convex}
For a set $X\subseteq\mathbb{R}^m$, we denote its interior as $\int(X)$, its relative interior as $\ri(X)$ and 
its closure as $\cl(S)$. We use the shorthand notation $[n]$ for the set $\{1,\dots,n\}$.

We will first review  some basic concepts from convex analysis related to cones.
\begin{definition}[Regular cone]
	A cone $K \subseteq  \mathbb{R}^m$ is called regular if it is closed, convex, pointed and full-dimensional.
\end{definition} 
Examples of regular cones include the nonnegative orthant, Lorentz (or second-order) cone and the cone of positive semidefinite matrices. We will refer to these cones as \textit{canonical cones} in this paper.

\begin{definition}[Dual cone]
	The dual cone to a cone $K \subseteq \mathbb{R}^m$ is defined as $K_* = \{y\in \mathbb{R}^m: x^Ty \ge 0, \ \forall x \in K\}$.
\end{definition}
It is well-known that the dual cone to a regular cone is also regular. In addition, the three canonical cones mentioned above are self-dual.

We will now define the exponential cone, which is the key ingredient of this paper.
\begin{definition}[Exponential cone]\label{def:exp cone}
	The exponential cone, denoted as $K_{\text{exp}}$, is defined as 
\begin{equation*}
\begin{split}
K_{\exp{}} &= \cl \big(\{ x \in \mathbb{R}^3: x_1 \ge x_2 e^{x_3/x_2}, \ x_2 > 0 \} \big).
\end{split}
\end{equation*}
\end{definition}

As opposed to the three canonical cones mentioned above, the exponential cone is not self-dual although it is a regular cone.

\begin{proposition}[See e.g. \cite{Serrano2015}]
The dual cone to the exponential cone (or simply the dual exponential cone) is given as
\begin{equation*}
\begin{split}
(K_{\exp{}})_* &= \\
 \cl \big( &\{ s \in \mathbb{R}^3: s_1 \ge -s_3 e^{(s_2-s_3)/s_3}, \ s_3 < 0 \} \big) .
\end{split}
\end{equation*}
\end{proposition}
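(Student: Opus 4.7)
The plan is to apply the definition of dual cone directly: a vector $s \in \mathbb{R}^3$ lies in $(K_{\exp})_*$ if and only if $s_1 x_1 + s_2 x_2 + s_3 x_3 \ge 0$ for every $x \in K_{\exp}$. Since the open set $\{x : x_2 > 0,\ x_1 \ge x_2 e^{x_3/x_2}\}$ is dense in $K_{\exp}$, I will impose the inner product condition only on this set and then take the closure at the end, which automatically produces the $\cl(\cdot)$ on the right-hand side.

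First I would minimize $s^T x$ over $x_1$ with $x_2, x_3$ fixed. Because $x_1$ can be arbitrarily large, a necessary condition is $s_1 \ge 0$; assuming this, the minimum is attained at the boundary $x_1 = x_2 e^{x_3/x_2}$, reducing the requirement to
\begin{equation*}
s_1 x_2 e^{x_3/x_2} + s_2 x_2 + s_3 x_3 \ge 0 \quad \text{for all } x_2>0,\ x_3\in\mathbb{R}.
\end{equation*}
Next I would minimize over $x_3$ for fixed $x_2>0$. If $s_1>0$, setting the derivative to zero gives $e^{x_3/x_2} = -s_3/s_1$, which has a real solution only when $s_3<0$; plugging $x_3 = x_2 \ln(-s_3/s_1)$ back yields, after dividing by the positive factor $x_2$, the scalar inequality
\begin{equation*}
s_2 - s_3 + s_3 \ln(-s_3/s_1) \ge 0.
\end{equation*}

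The main obstacle is really just algebraic: rearranging this into the stated form. Dividing by $s_3<0$ flips the inequality and gives $\ln(-s_3/s_1) \le (s_3-s_2)/s_3$; exponentiating and multiplying by $-s_3>0$ leads exactly to $s_1 \ge -s_3 e^{(s_2-s_3)/s_3}$, matching the proposition. The remaining work is the boundary bookkeeping: (i) if $s_1>0, s_3 \ge 0$, letting $x_3\to -\infty$ shows $s^T x$ is unbounded below unless $s_3=0$ and $s_2\ge 0$, giving points already captured by the closure; (ii) if $s_1=0$, the condition forces $s_3=0$ and $s_2\ge 0$, likewise in the closure (take $s_1\to 0^+$ with appropriate $s_3\to 0^-$). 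Conversely, any $s$ in the candidate set satisfies $s^T x \ge 0$ on the dense subset by reversing the minimization argument, and the inequality extends to all of $K_{\exp}$ by continuity. Together these give both inclusions and complete the identification of $(K_{\exp})_*$.
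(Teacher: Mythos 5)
Your derivation is correct, but note that the paper itself offers no proof of this proposition at all: it is stated with a pointer to the literature (``See e.g.~\cite{Serrano2015}''), so there is nothing internal to compare against. What you supply is a self-contained, elementary computation of the dual cone from the definition: restricting the bilinear condition $s^Tx\ge 0$ to the dense subset $\{x: x_1\ge x_2e^{x_3/x_2},\,x_2>0\}$, minimizing first in $x_1$ (forcing $s_1\ge 0$), then in $x_3$ via the stationarity condition $e^{x_3/x_2}=-s_3/s_1$, and rearranging $s_2-s_3+s_3\ln(-s_3/s_1)\ge 0$ into $s_1\ge -s_3e^{(s_2-s_3)/s_3}$ — all of this checks out, and your boundary cases ($s_1>0,\ s_3\ge 0$ forcing $s_3=0,\ s_2\ge 0$; and $s_1=0$ forcing $s_3=0,\ s_2\ge 0$) are handled correctly and do land in the stated closure. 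Two small bookkeeping remarks for a fully rigorous write-up: for the reverse inclusion you need not only that the candidate set $\{s_1\ge -s_3e^{(s_2-s_3)/s_3},\,s_3<0\}$ lies in $(K_{\exp})_*$ (your convexity-in-$x_3$ argument gives this), but also that its closure does, which follows immediately since a dual cone is always closed; and the homogeneity in $x_2$ (the factor $x_2>0$ divides out cleanly) is what makes the reduction to a single scalar inequality legitimate — worth stating explicitly. Neither point is a gap in substance; your proposal gives a complete and more informative argument than the paper's citation-only treatment.
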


The following definitions are  instrumental in the description of conic programming problems:
\begin{definition}[Conic inequality]
	A conic inequality with respect to a regular cone $K$ is defined as $x \succeq_{K} y$, meaning that  $x - y \in K$. 
	We will denote the relation $x \in \int(K)$ alternatively as  $x \succ_K 0$.
\end{definition}
\begin{definition}[Conic representability of a set]
A set $X \subseteq \mathbb{R}^n$ is called conic representable if it can be expressed as
	\[
	X = \{ x\in\mathbb{R}^n : \exists y\in\mathbb{R}^k : Ax + B y \succeq_K b\},
	\]
	for some appropriately chosen regular cone $K$.

If any of the variables in this representation is integer, then $X$ is called a mixed-integer conic representable set.
\end{definition}

\begin{definition}[Conic representability of a function]\label{def:conic repr func}
A function is called conic representable if its epigraph is a conic representable set.
\end{definition}

In this paper, when we say that ``a set or function is conic representable'', we will implicitly assume that the cone used in its representation is either one of the three canonical cones or the (dual) exponential cone.


\subsection{Probability Theory}
\label{sec:prelim prob}

\begin{definition}[Probability simplex]
The probability simplex in dimension $S$ is denoted as 
\[
\Delta^S := \big\{ p\in\mathbb{R}^S_+ : \ \sum_{s=1}^S p_s = 1 \big \}.
\]
\end{definition}


The following function can be used to measure the ``divergence'' of one distribution from another.
\begin{definition}[KL Divergence]\label{def:KL divergence}
For two discrete probability distributions $p\in \Delta^S$ and $q \in \ri(\Delta^S$), the KL divergence from $p$ to $q$ is defined as
\[
D_{KL} ( p || q) := \sum_{s=1}^S p_s \log (p_s / q_s ).
\]
\end{definition}
We note that the KL Divergence does not define a distance metric between two probability distributions since it is not symmetric. However, it has the following useful property.
\begin{proposition}\label{prop:D_KLisExpConeRepr}
Let $p\in \Delta^S$ and $q \in \ri(\Delta^S$). Then, the function  $D_{KL} ( p || q) $ is exponential cone representable.
\end{proposition}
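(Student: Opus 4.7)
The plan is to write the epigraph of $D_{KL}(p\|q)$ as the projection of a set defined by $S$ exponential-cone constraints together with a single linear inequality. Concretely, I would introduce auxiliary variables $t_1,\dots,t_S$ intended to upper-bound the per-scenario terms $p_s\log(p_s/q_s)$, and then enforce $t\ge\sum_s t_s$ so that $t$ dominates the full KL divergence.

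The key algebraic step is to recognize, for each $s\in[S]$, the equivalence
\[
t_s \;\ge\; p_s\log(p_s/q_s) \quad\Longleftrightarrow\quad q_s \;\ge\; p_s\,e^{-t_s/p_s},
\]
valid when $p_s>0$. By comparing with Definition~\ref{def:exp cone}, the right-hand inequality is exactly the statement that the triple $(q_s,p_s,-t_s)$ lies in $K_{\exp}$ (identify $x_1=q_s$, $x_2=p_s$, $x_3=-t_s$). Thus each individual term of $D_{KL}$ can be controlled by a single exponential-cone membership constraint.

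Putting the pieces together, I would claim that the epigraph $\{(p,q,t)\in\Delta^S\times\ri(\Delta^S)\times\mathbb{R}:\,t\ge D_{KL}(p\|q)\}$ is the projection onto $(p,q,t)$ of the set
\[
\Bigl\{(p,q,t,t_1,\dots,t_S):\ (q_s,p_s,-t_s)\in K_{\exp}\ \forall s\in[S],\ \ t\ge \textstyle\sum_{s=1}^S t_s\Bigr\}.
\]
Since this system involves only linear inequalities and $S$ copies of the (regular) exponential cone, the set is conic representable in the sense of Definition~\ref{def:conic repr func}, and projecting out the $t_s$ preserves conic representability by definition.

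The main technical subtlety, and what I would expect to be the trickiest part of the write-up, is the boundary behavior at $p_s=0$: the definition of $D_{KL}$ uses the convention $0\log 0 = 0$, so the epigraph should allow $p_s=0$ with no contribution to $t$. This is exactly why the \emph{closure} is taken in Definition~\ref{def:exp cone}: the closure of $K_{\exp}$ contains the ray $\{(q_s,0,-t_s):q_s\ge 0,\,t_s\ge 0\}$, so a point with $p_s=0$ is representable iff $q_s\ge 0$ and $t_s\ge 0$, matching the convention. A brief limit argument, or a direct appeal to the known description of $\cl(K_{\exp})$, will close this gap and complete the proof.
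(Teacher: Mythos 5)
Your proof is correct and follows essentially the same route as the paper: introduce one auxiliary variable per scenario, encode each term via the membership $(q_s,p_s,-\delta_s)\in K_{\exp}$, and couple them with a single linear inequality so that the epigraph is a projection of an exponential-cone-representable set. The only difference is cosmetic --- the paper cites the MOSEK modeling cookbook for the per-term representation $t\ge x\log(x/y)\iff(y,x,-t)\in K_{\exp}$, while you derive it from Definition~\ref{def:exp cone} directly and explicitly check the $p_s=0$ boundary case, which is a welcome extra detail but not a different argument.
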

\begin{proof}
Due to Definition~\ref{def:conic repr func}, it suffices to show that the epigraph of the function  $D_{KL} ( p || q) $ is an exponential cone representable set.
Since the set $\{(x,y,t)\in\mathbb{R}^3: t\ge x\log(x/y)\}$ has the exponential cone representation $(y,x,-t)\in K_{\text{exp}}$ \cite{MOSEK2020}, we obtain an exponential cone representation for  the function  $D_{KL} ( p || q) $ as follows:
\begin{equation*}\label{eq:D_KLisExpConeRepr}
\begin{split}
& \{ (p,q,\epsilon) \in  \Delta^S \times  \ri(\Delta^S) \times \mathbb{R} :  D_{KL} ( p || q)  \le \epsilon  \} \\
= & \big\{ (p,q,\epsilon) : \exists \delta \in \mathbb{R}^S : \sum_{s=1}^S \delta_s \le \epsilon, \\
& \hspace{1.5cm} (q_s, p_s, -\delta_s) \in K_{\text{exp}}, \  s\in [S] \big\}.
\end{split}
\end{equation*}
\end{proof}

The following proposition gives an upper bound on the KL-divergence of a given distribution from any other distribution.
\begin{proposition}\label{prop:D_KL_max}
Let  $q \in \ri(\Delta^S$). Then, 
\[
\overline{\epsilon}(q) := \sup_{ p \in \Delta^S} \{ D_{KL} ( p || q)  \} =   \log(1 / \min_{s\in[S]}\{q_s\}) .
\]
\end{proposition}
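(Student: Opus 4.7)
The plan is to prove the equality by showing (i) an upper bound $\log(1/\min_s q_s)$ that holds for every $p\in\Delta^S$, and (ii) that this bound is attained by a specific feasible $p$, thereby establishing that the supremum equals the claimed quantity.

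For the upper bound, I would start from the definition
\[
D_{KL}(p||q) = \sum_{s=1}^S p_s \log p_s - \sum_{s=1}^S p_s \log q_s,
\]
using the convention $0\log 0 = 0$ that is standard for this functional (and consistent with the fact that $q\in\ri(\Delta^S)$ keeps the $\log q_s$ terms finite). Since $p\in\Delta^S$ gives $p_s \in [0,1]$, each term $p_s \log p_s$ is nonpositive, so the first sum is $\le 0$. For the second sum, observe that $-\log q_s \le -\log(\min_{t\in[S]} q_t)$ for every $s$, and therefore
\[
-\sum_{s=1}^S p_s \log q_s \;\le\; -\log(\min_{t\in[S]} q_t)\sum_{s=1}^S p_s \;=\; \log\!\left(\tfrac{1}{\min_{t\in[S]} q_t}\right),
\]
where the last equality uses $\sum_s p_s = 1$. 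Combining these two estimates yields $D_{KL}(p||q) \le \log(1/\min_{t\in[S]} q_t)$ for all $p\in\Delta^S$, which proves the $\le$ direction of the claimed identity.

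For attainment, I would pick any index $s^\star \in \arg\min_{s\in[S]} q_s$ and take $p$ to be the Dirac mass at $s^\star$, i.e.\ $p_{s^\star} = 1$ and $p_s = 0$ for $s \ne s^\star$. This $p$ lies in $\Delta^S$ (it is on the boundary, which is why we need the supremum rather than requiring $p\in\ri(\Delta^S)$). Plugging in and again using $0\log 0 = 0$, we get $D_{KL}(p||q) = 1\cdot \log(1/q_{s^\star}) = \log(1/\min_{t\in[S]} q_t)$, matching the upper bound and completing the proof.

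The only subtlety, and the single point I would be careful about, is the convention $0\log 0 = 0$ when $p$ has zero entries: it is needed both to make sense of $D_{KL}(p||q)$ for boundary $p$ and to justify the attainment step. There are no real technical obstacles beyond this; the argument is a direct two-line estimate plus a point-mass construction.
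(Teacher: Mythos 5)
Your proof is correct, and it takes a different route from the paper for the key step. The paper argues structurally: since $D_{KL}(\cdot\,||\,q)$ is convex and $\Delta^S$ is a polytope, the supremum is attained at an extreme point, the extreme points are the unit vectors $e_s$, and $D_{KL}(e_s||q)=\log(1/q_s)$, so the maximum is $\log(1/\min_s q_s)$ (using the same $\lim_{x\to 0^+}x\log(x/y)=0$ convention you invoke). You instead prove the upper bound by a direct two-term estimate, splitting $D_{KL}(p||q)=\sum_s p_s\log p_s-\sum_s p_s\log q_s$, discarding the nonpositive entropy term and bounding the cross-entropy term by $\log(1/\min_s q_s)$, then exhibiting the Dirac mass at an index achieving $\min_s q_s$ to show attainment. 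The paper's argument buys generality (any convex objective over a polytope reduces to a vertex enumeration) and matches its broader conic/convex-analytic framing, but it silently relies on the fact that a continuous convex function attains its maximum over a compact convex set at an extreme point; your argument is more elementary and self-contained, needing only $p_s\log p_s\le 0$, $\sum_s p_s=1$, and the explicit $0\log 0=0$ convention, which you are right to flag as the one point requiring care for boundary $p$.
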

\begin{proof}
Notice that the objective function of the optimization problem $\sup_{ p \in \Delta^S} \{ D_{KL} ( p || q) \}$ is convex and its feasible region is a polytope. Therefore, there exists an optimal solution which is an extreme point of $\Delta^S$. Note that the extreme points of  $\Delta^S$ are the unit vectors of $\mathbb{R}^S$, denoted by $e_s$ for $s\in[S]$. Then, we have 
\begin{equation*}
\begin{split}
\overline{\epsilon}(q) & =  \max_{s\in[S]}  \{ D_{KL} ( e_s || q) \} \\
& =  \max_{s\in[S]}  \{ \log (1 / q_s ) \} \\
& =  \log(1 / \min_{s\in[S]}\{q_s\}).
\end{split}
\end{equation*}
In the second equality, we use the fact that $\lim_{x \to 0^+} x \log (x/y) = 0$ for $y>0$.
\end{proof}
Proposition~\ref{prop:D_KL_max} is useful to quantify the ambiguity sets in KL divergence constrained DRO problems as we will see later.

\section{Main Results}
\label{sec:main}

In this section, we present our main result about the reformulation of a  KL divergence constrained DRO problem as a conic program under mild conditions. 

\subsection{Generic Problem Formulation}\label{sec:genericProb}
We  first give the generic problem setting  considered in this paper. 
Suppose that there are $m$ random variables $\xi^i \in \mathbb{R}$, $i\in [m]$, each with a discrete distribution $q^i \in \ri(\Delta^{S_i})$ estimated from the historical data as 
\[
\Pr(\xi^i = d_{s}^i) = q^i_s  \quad  s\in [S_i],
\]
where $\{d_s^i : s \in [S_i]\}$ is the set of observed realizations of $\xi^i$, $i\in [m]$. 
%
%
Under this probabilistic setting, we define the ambiguity set
\begin{equation}
\mathcal{P}^i(q^i, \epsilon^i) := \{ p^i \in \Delta^{S_i} : \ D_{KL} ( p^i || q^i) \le \epsilon^i \},
\end{equation}
for  $i\in [m]$, where $\epsilon^i \in \mathbb{R}_+$ controls the divergence from the historical data (or robustness level). Then, we consider  the following KL divergence constrained DRO problem,
\begin{equation}\label{eq:generic}
\min_{y \in \mathcal{Y}} \big\{ h(y) + \sum_{i=1}^m \max_{ p^i \in  \mathcal{P}^i(q^i, \epsilon^i) } \mathbb{E}_{\xi^i} [ H^i (y, \xi^i) ] \big\}, 
\end{equation}
where each expectation is taken with respect to an ambiguous distribution $ p^i \in \mathcal{P}^i(q^i, \epsilon^i)$.
In problem~\eqref{eq:generic}, $h$ is a real-valued function defined on $\mathbb{R}^n$;   $H^i$ is a real-valued function defined on $\mathbb{R}^n \times \mathbb{R}$; and $\mathcal{Y}$ is a subset of $\mathbb{R}^n$. Observe that given $y$ decisions, the inner maximization problem is decomposable over the random elements~$\xi^i$, $i\in [m]$. 

\subsection{Robust Counterpart and Conic Reformulation} 
We will now obtain the robust counterpart \cite{BEN2009} of problem~\eqref{eq:generic} utilizing Conic Duality under mild conditions.
\begin{theorem}\label{thm:main}
Consider the KL divergence constrained DRO problem~\eqref{eq:generic} as described in Section~\ref{sec:genericProb}, and assume that $\epsilon^i > 0$, $i\in[m]$. Then, the robust counterpart is given as follows:
 \begin{subequations} \label{eq:genericReform}
\begin{align}
\min \ &  h(y) + \sum_{i=1}^m \bigg [ \alpha^i + \epsilon^i \beta^i + \sum_{s=1}^{S_i} q_s^{i} u_s^i \bigg ]  \\
\text{s.t.}
\ & \alpha^{i}  - v_s^i \ge  H^{i} (y, d_s^{i})   \hspace{0.5cm}  i \in [m]; s \in [S_i]  \label{eq:genericReform1} \\
\ & \beta^i + w_s^i = 0   \hspace{1.7cm} i\in [m]; s\in [S_i]   \label{eq:genericReform2} \\
\ &  \alpha^i \in \mathbb{R}, \beta^i \in \mathbb{R}_+  \hspace{1.0cm} i\in [m]  \label{eq:genericReform3} \\
\ & (u^i_s, v^i_s, w^i_s) \in   (K_{\exp{}})_*  \hspace{0.1cm} i\in [m]; s\in [S_i]    \label{eq:genericReform4}  \\
\ &   y \in \mathcal{Y}   .
\end{align}
\end{subequations}
\end{theorem}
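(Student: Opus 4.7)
The plan is to fix $y \in \mathcal{Y}$ and exploit the fact that the inner maximization in~\eqref{eq:generic} decouples across $i \in [m]$. For each $i$, the subproblem is to maximize $\sum_{s=1}^{S_i} p^i_s H^i(y, d^i_s)$ over $p^i \in \Delta^{S_i}$ subject to $D_{KL}(p^i \| q^i) \le \epsilon^i$. Using Proposition~\ref{prop:D_KLisExpConeRepr}, I would lift this subproblem by introducing auxiliary variables $\delta^i_s$ and rewriting the divergence constraint as $(q^i_s, p^i_s, -\delta^i_s) \in K_{\exp{}}$ for each $s$ together with $\sum_s \delta^i_s \le \epsilon^i$, while keeping $p^i_s \ge 0$ and $\sum_s p^i_s = 1$ as explicit constraints.

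Next I would apply conic duality to this lifted subproblem. Assigning a free multiplier $\alpha^i$ to the normalization equality, a nonnegative multiplier $\beta^i$ to the divergence budget, and a dual-cone triple $(u^i_s, v^i_s, w^i_s) \in (K_{\exp{}})_*$ to each exponential cone constraint, I would form the Lagrangian and collect coefficients. The $y$-independent terms contribute $\alpha^i + \epsilon^i \beta^i + \sum_s q^i_s u^i_s$ to the dual objective; the coefficient of each nonnegative $p^i_s$, namely $H^i(y, d^i_s) - \alpha^i + v^i_s$, must be nonpositive to keep the supremum bounded, which yields the inequality~\eqref{eq:genericReform1}; and the coefficient of each sign-free $\delta^i_s$, namely $-\beta^i - w^i_s$, must vanish, which yields the equality~\eqref{eq:genericReform2}. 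This reproduces~\eqref{eq:genericReform1}--\eqref{eq:genericReform4} exactly.

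The step requiring the most care is strong conic duality, since $K_{\exp{}}$ is regular but not polyhedral, so weak duality alone is not enough to equate the inner max with its dual min. I would verify a Slater-type interior-point condition. With $\epsilon^i > 0$ and $q^i \in \ri(\Delta^{S_i})$ (so $q^i_s > 0$ for every $s$), a strictly feasible primal point is readily constructed: take $p^i = q^i$, which gives $D_{KL}(p^i \| q^i) = 0 < \epsilon^i$, and pick $\delta^i_s > 0$ sufficiently small so that $\sum_s \delta^i_s < \epsilon^i$ and $(q^i_s, q^i_s, -\delta^i_s) \in \int(K_{\exp{}})$. This ensures zero duality gap, so the optimal value of the inner maximization equals that of its conic dual for every fixed $y$.

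Finally, substituting each dual subproblem back into~\eqref{eq:generic} merges the outer minimization over $y$ with the dual minimizations over $(\alpha^i, \beta^i, u^i, v^i, w^i)$, producing~\eqref{eq:genericReform}. I expect the main obstacle to be cleanly justifying strong duality at the boundary of $K_{\exp{}}$ (the strict-feasibility argument depends on being able to place the primal strictly inside the cone, which in turn uses $q^i \in \ri(\Delta^{S_i})$ in an essential way); the rest of the derivation is routine bookkeeping once the primal has been cast in standard conic form and the signs of the dual multipliers are handled consistently.
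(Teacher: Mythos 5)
Your proposal is correct and follows essentially the same route as the paper: lift the inner maximization into an exponential cone program via Proposition~\ref{prop:D_KLisExpConeRepr}, dualize with the same assignment of multipliers ($\alpha^i$ to the normalization, $\beta^i \ge 0$ to the budget, dual-cone triples to the conic constraints), justify strong duality from $\epsilon^i > 0$ and $q^i \in \ri(\Delta^{S_i})$ (the paper invokes essential strict feasibility with $\delta^i_s = \epsilon^i/|S_i|$ plus boundedness of the primal value, while you build a fully strict Slater point), and substitute the duals back into the outer minimization. The only nuance worth stating explicitly is that strong duality also uses finiteness of the inner optimal value (bounded above by $\max_{s} H^i(y,d^i_s)$), which your argument leaves implicit.
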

\begin{proof}
We will start the proof by analyzing the inner maximization problems.
 Given a vector $y\in\mathcal{Y}$,  let us write the $i$-th inner maximization problem explicitly as the following exponential cone constrained program:
\begin{subequations} \label{eq:innerMax_i}
\begin{align}
\max \ & \sum_{s=1}^{S_i} H^{i} (y, d_s^{i})  p_s^{i} \\
\text{s.t.}
\ & \sum_{s=1}^{S_i}  p_s^{i} = 1  \label{eq:innerMaxCons1} \\
\ & \sum_{s=1}^{S_i}  \delta_s^{i} \le \epsilon^i  \label{eq:innerMaxConsEps} \\
\ & \begin{bmatrix} 0 & 0 \\ -1 & 0 \\ 0 & 1 \end{bmatrix} \begin{bmatrix} p_s^i \\ \delta_s^i \end{bmatrix} \preceq_{K_{\text{exp}}}  
\begin{bmatrix} q_s^i \\ 0 \\ 0 \end{bmatrix} \ s\in [S_i] \label{eq:innerMaxConsConic} \\
\ & p_s^i \in \mathbb{R}_+,  \  \delta_s^i \in \mathbb{R} \hspace{1.9cm} s\in [S_i] . \label{eq:innerMaxConsNonneg} 
\end{align}
\end{subequations}
Here, constraints~\eqref{eq:innerMaxCons1}-\eqref{eq:innerMaxConsNonneg}  model the relation $p^i \in \mathcal{P}^i(q^i, \epsilon^i)$, as  stated in Proposition~\ref{prop:D_KLisExpConeRepr}.

Recall that $\epsilon^i > 0$ for each $i\in [m]$. Then, each  inner maximization problem~\eqref{eq:innerMax_i} satisfies essential strict feasibility \cite{BN2001} (e.g. consider $p^i_s = q^i_s$ and $\delta_i^s = \epsilon^i / |S_i|$ for $s\in[S_i]$), and its optimal value is bounded above (e.g. by $\max_{s\in[S_i]} H^i(y, d^i_s)$). Therefore, strong duality holds between problem~\eqref{eq:innerMax_i} and its conic dual given as follows:
\begin{subequations} \label{eq:innerMax_iDual}
\begin{align}
\min \ &  \alpha^i + \epsilon^i \beta^i + \sum_{s=1}^{S_i} q_s^{i} u_s^i \\
\text{s.t.}
\ & \alpha^{i}  - v_s^i \ge  H^{i} (y, d_s^{i})   \hspace{1.0cm}  s\in [S_i]   \\
\ & \beta^i + w_s^i = 0   \hspace{2.2cm}  s\in [S_i]   \\
\ &   \alpha^i \in \mathbb{R}, \beta^i \in \mathbb{R}_+ \\
\ & (u^i_s, v^i_s, w^i_s) \in   (K_{\text{exp}})_*  \hspace{0.6cm} s\in [S_i] .  
\end{align}
\end{subequations}
 Here,  $\alpha^i$, $\beta^i$ and $(u^i_s, v^i_s, w^i_s)$ are the dual variables  associated with the  primal constraints \eqref{eq:innerMaxCons1}, \eqref{eq:innerMaxConsEps} and \eqref{eq:innerMaxConsConic}, respectively. Notice that problem~\eqref{eq:innerMax_iDual} is a dual exponential cone constrained program.
 
 As the final step in the proof, we write the dual of each inner maximization problem and obtain the robust counterpart of problem~\eqref{eq:generic} as problem~\eqref{eq:genericReform}.
\end{proof}
 
We will now discuss the consequences of Theorem~\ref{thm:main} under additional  structural properties such as convexity and conic representability.
 
\begin{corollary} \label{cor:convex}

Consider the KL divergence constrained DRO problem~\eqref{eq:generic} as described in Theorem~\ref{thm:main}. In addition, let us assume that $\mathcal{Y}$ is a convex set, $h(y)$  and $H^i(y,\xi^i)$ are convex functions in $y$, $i\in[m]$. Then, the robust counterpart~\eqref{eq:genericReform} is a convex program.
\end{corollary}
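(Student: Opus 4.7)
The plan is to establish convexity by inspecting each piece of problem~\eqref{eq:genericReform} and verifying that (i) the objective function is convex in all decision variables $(y, \alpha, \beta, u, v, w)$, and (ii) the feasible region is a convex set. Since a finite sum of convex constraints defines a convex feasible region, and a convex objective minimized over a convex set constitutes a convex program, this check is sufficient.

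First I would handle the objective. The term $h(y)$ is convex in $y$ by hypothesis, and the remaining terms $\sum_{i=1}^m [\alpha^i + \epsilon^i \beta^i + \sum_{s=1}^{S_i} q_s^i u_s^i]$ are linear (hence convex) in the dual variables. As $h$ does not depend on the dual variables and the dual expression does not depend on $y$, the full objective is a sum of convex functions and is therefore convex jointly in $(y,\alpha,\beta,u)$.

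Next I would go through the constraints one at a time. Constraint~\eqref{eq:genericReform1}, rewritten as $H^i(y, d_s^i) - \alpha^i + v_s^i \le 0$, is a sublevel set of a function which is convex in $y$ (by assumption on $H^i$) and linear in $(\alpha^i, v_s^i)$; hence it is convex jointly. Constraint~\eqref{eq:genericReform2} is an affine equality, and the sign restrictions in~\eqref{eq:genericReform3} are linear inequalities. The conic membership~\eqref{eq:genericReform4} is convex because the dual cone of any set is an intersection of half-spaces and thus convex; in particular $(K_{\exp{}})_*$ is a (regular) convex cone. Finally, $y \in \mathcal{Y}$ is convex by assumption. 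The feasible region is the intersection of these convex sets and is therefore convex.

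The argument is essentially a bookkeeping exercise, so there is no real obstacle; the only subtle point worth flagging explicitly is that the nonlinearity appearing in~\eqref{eq:genericReform1} involves $H^i(y, d_s^i)$ with $d_s^i$ treated as a fixed parameter (the $s$-th observed realization), so convexity of $H^i$ in its first argument is exactly what is needed. The conclusion that problem~\eqref{eq:genericReform} is a convex program then follows immediately.
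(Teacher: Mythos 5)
Your verification is correct and matches the paper's (implicit) reasoning: the corollary is stated as an immediate consequence of the structure of reformulation~\eqref{eq:genericReform}, and your constraint-by-constraint check --- convex objective via convexity of $h$ plus linearity in the dual variables, convexity of the sublevel-set constraint~\eqref{eq:genericReform1} using convexity of $H^i(\cdot,d_s^i)$ in $y$, affine/conic/convex-set constraints for the rest --- is exactly that argument. Nothing is missing.
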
 

\begin{corollary} \label{cor:conic}

Consider the KL divergence constrained DRO problem~\eqref{eq:generic} as described in Theorem~\ref{thm:main}. In addition, let us assume that $\mathcal{Y}$ is a (mixed-integer) conic representable set, $h(y)$  and $H^i (y, \xi^i)$ are conic representable functions, $i\in[m]$. Then, the robust counterpart~\eqref{eq:genericReform} is a dual exponential cone constrained (mixed-integer) program.
\end{corollary}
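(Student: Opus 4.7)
The plan is to start from the reformulation \eqref{eq:genericReform} produced by Theorem~\ref{thm:main} and show that every ingredient can be rewritten in standard conic form using the canonical cones together with the (dual) exponential cone. Since the feasible region of a conic program is the direct sum of its constituent cones, it suffices to exhibit a conic representation of each piece separately and then stack them. Because Theorem~\ref{thm:main} already handles the ambiguity-set duality, the remaining work is purely one of translating the deterministic parts of \eqref{eq:genericReform} into conic form.

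First, I would lift the objective via the standard epigraph trick: introduce an auxiliary scalar $\tau$, replace the term $h(y)$ in the objective by $\tau$, and append the constraint $h(y) \le \tau$. By the hypothesis that $h$ is conic representable, this inequality translates into a conic constraint (possibly with extra auxiliary variables) of the required form. The constraint $y \in \mathcal{Y}$ is dealt with analogously by inserting the given (mixed-integer) conic representation of $\mathcal{Y}$; any integrality restrictions appearing there survive untouched and produce the mixed-integer variant of the final program.

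Next, I would treat constraints~\eqref{eq:genericReform1}. For each pair $(i,s)$, the realization $d_s^i$ is a \emph{fixed} data value, so the map $y \mapsto H^i(y, d_s^i)$ inherits conic representability in $y$. Consequently there exist auxiliary variables $z_s^i$ and a conic constraint realizing the epigraph $\{(y, t) : H^i(y, d_s^i) \le t\}$; replacing $t$ by $\alpha^i - v_s^i$ recasts \eqref{eq:genericReform1} as a single conic constraint in $(y, \alpha^i, v_s^i, z_s^i)$. The remaining pieces of \eqref{eq:genericReform} require no further work: the equations \eqref{eq:genericReform2}, the sign restriction in \eqref{eq:genericReform3}, and the bookkeeping terms $\alpha^i + \epsilon^i \beta^i + \sum_s q_s^{i} u_s^i$ in the objective are all linear, while \eqref{eq:genericReform4} is, by definition, a dual exponential cone constraint.

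Aggregating the lifted pieces above yields a problem whose only nonlinear structure is confined to canonical or dual exponential cone constraints (plus possible integrality on the $\mathcal{Y}$-variables), which is exactly the claim. I do not anticipate a genuine technical obstacle here; the argument is essentially a composition/bookkeeping exercise built on Theorem~\ref{thm:main}. The single point that merits care is the closure property that conic representability is preserved under intersection and Cartesian product, which is immediate from taking direct sums of the underlying cones and concatenating the auxiliary variable blocks introduced in each lift.
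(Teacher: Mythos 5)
Your proposal is correct and matches the paper's (implicit) reasoning: the paper states Corollary~\ref{cor:conic} without a separate proof, treating it as an immediate consequence of Theorem~\ref{thm:main} obtained by substituting the conic representations of $\mathcal{Y}$, the epigraph of $h$, and the epigraphs of $y \mapsto H^i(y,d_s^i)$ into \eqref{eq:genericReform}, exactly as you do (and as the Newsvendor and UFL specializations illustrate, where constraint~\eqref{eq:genericReform1} is handled by the piecewise-linear epigraph of $H^i$). No gaps; your bookkeeping argument is the intended one.
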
 

As an application of Corollary~\ref{cor:conic}, we will consider the Newsvendor Problem in Section~\ref{sec:newsvendor} and the Uncapacitated Facility Location Problem in Section~\ref{sec:ufl}. The common characteristic of these two problems is that the set $\mathcal{Y}$ is a mixed-integer linear set, the function $h$ is a linear function and the functions $H^i$ are the maxima of linear functions (hence, they are polyhedrally representable).
%
%
%

\section{Application to the Newsvendor Problem}\label{sec:newsvendor}

In this section, we analyze 
  a toy example,   the KL divergence constrained DR version of the single-period, single-product Newsvendor Problem. 
In this case, since there is only one random variable $\xi$ (that is, $m=1$), representing the unknown demand, we will omit the superscript $i$ for convenience.

\subsection{Problem Formulation}

Consider the generic formulation~\eqref{eq:generic} with the following specifications:
We let $y \in  \mathcal{Y} := \mathbb{Z}_+$ be the order quantity, and consider  functions 
\[
h(y) := c y,
\]
where $c$ is the variable order cost, and
\[
H(y, \xi) := c_b \max\{\xi - y, 0\} + c_h \max \{y-\xi, 0\},
\]
where $c_b$ is the back-order penalty for unsatisfied demand and $c_h$ is the inventory cost. Notice that  $H(y,\xi)$ is a piecewise linear convex function in $y$ and can be rewritten as
\[
H(y, \xi) = \max\{ -c_b y + c_b \xi, c_h y - c_h \xi \}.
\]
This observation will be useful to linearize constraint~\eqref{eq:genericReform2}.

By omitting $i$ indices and simplifying the notation of problem~\eqref{eq:genericReform} by taking into account the special structure of the newsvendor problem, 
we obtain the following dual exponential cone constrained MIP as its robust counterpart:
\begin{subequations} \label{eq:newsReform}
\begin{align}
\min \ &  cy + \big[ \alpha + \epsilon \beta + \sum_{s=1}^{S} q_s u_s \big] \\
\text{s.t.}
\ & \alpha  - v_s \ge   -c_b y + c_b d_s     \hspace{0.5cm}  s\in [S]   \\
\ & \alpha  - v_s \ge    c_h y - c_h  d_s   \hspace{.75cm}  s\in [S]   \\
\ & \beta + w_s= 0   \hspace{2.2cm}  s\in [S]   \\
\ &  y \in \mathbb{Z}_+, \alpha \in \mathbb{R}, \beta \in \mathbb{R}_+ \\
\ & (u_s, v_s, w_s) \in   (K_{\text{exp}})_*  \hspace{0.55cm} s\in [S] .  
\end{align}
\end{subequations}

\subsection{Computations}

\subsubsection{Experimental Setup}
To compare the effect of robustness level of KL divergence constrained DR version of the Newsvendor Problem, we propose Algorithm~\ref{alg:news}. Note that setting $\epsilon=0$ in problem~\eqref{eq:newsReform} reduces it to stochastic program while
 and larger values of $\epsilon$ indicate more robustness (and conservativeness) in solutions.

\begin{algorithm}
\label{alg:news}
\begin{algorithmic}[1]
\REQUIRE A probability distribution $\mathcal{D}$, the number of samples $R$, the set of robustness levels  $\mathcal{T}$.
\STATE Sample $R$  random variates from $\mathcal{D}$ for training, and obtain the empirical distribution $q$ and the maximum KL divergence $\overline\epsilon(q)$ as computed in Proposition~\ref{prop:D_KL_max}.
\STATE Solve problem~\eqref{eq:newsReform} with  $\epsilon := \theta \overline\epsilon(q)$ for each $\theta \in \mathcal{T}$ to obtain a decision $y^*(\theta)$. 
\STATE Sample $R$  random variates from $\mathcal{D}$ for testing, and then compute the cost realizations for each realization under the decision $y^*(\theta)$. 
\end{algorithmic}
\end{algorithm}

We implement Algorithm~\ref{alg:news} in the Python programming language and use MOSEK 9.2 \cite{MOSEK2020py} to solve the dual exponential cone constrained MIP problem~\eqref{eq:newsReform}.

\subsubsection{Results}
\label{sec:newsResults}

For this illustration, we choose the following cost coefficients:
\[
c=1, \ c_b=2, \ c_h=1.
\]
We will now specify the parameters of Algorithm~\ref{alg:news}.
Firstly, we experiment with three different discrete distributions:
\begin{itemize}
\item Discrete Uniform Distribution with parameters 0 and 10, denoted as \texttt{Uniform(0, 10)}.
\item Binomial Distribution with parameters 10 and 0.5, denoted as {\texttt{Binomial(10, 0.5)}}.
\item Poisson Distribution with parameter 5, denoted as {\texttt{Poisson(5)}}.
\end{itemize}
We sample $R=100$ random variates separately to obtain ``training'' and ``test'' datasets. 
Then, we repeat the experiments for the following ``robustness'' levels:
\[
\mathcal{T} := \{0.00, 0.05, 0.10, 0.15, 0.20, 0.25\}.
\]

The summary statistics of our experiments are reported in Tables~\ref{tab:newsUnif}-\ref{tab:newsPoi}  for  Uniform, Binomial and Poisson distributions, respectively. In particular, we report the average and standard deviation of the cost realizations, abbreviated as ``Avg.'' and ``St. Dev.'', respectively. In addition, we compute the average of the worst 10\% of the realizations, abbreviated as ``Worst 10\%'', to quantify  the risk.
We observe that as the robustness level~$\theta$ increases, the optimal order quantity $y^*$ increases (recall that $\theta=0.00$ corresponds to the stochastic programming approach). Moreover, with increasing $\theta$, the average cost increases while the standard deviation and the average of worst realizations decrease for each distribution. This is an expected behavior when robust optimization is utilized. 
We note  that Binomial distribution is the least sensitive with respect to $\theta$ as the order quantity (and performance measures) do not change after $\theta \ge 0.05$. On the other hand, Uniform and Poisson distributions are more sensitive  with respect to this parameter. 

We also repeat the experiments with even higher values of $\theta$ and  observe that only the  results corresponding to the Poisson  distribution changes, which we attribute to its right-skewness. However, the order quantities in those cases are very high, which result in overly conservative policies and deteriorated performance measures.

\noindent
\begin{minipage}{\linewidth}
\captionof{table}{Summary results for the Newsvendor Problem with \texttt{Uniform(0, 10)} and $R=100$.}\label{tab:newsUnif}
\centering
\begin{tabular}{ccccc}
\hline
$\theta$ & $y^*$ &   Avg. & St. Dev. &  Worst 10\% \\
\hline
 $0.00$ &        4 &       8.08 &       2.92 &      13.80 \\

 $0.05$ &        4 &       8.08 &       2.92 &      13.80 \\

 $0.10$ &       5 &       8.67 &       2.22 &      12.80 \\

 $0.15$ &       5 &       8.67 &       2.22 &      12.80 \\

 $0.20$ &        5 &       8.67 &       2.22 &      12.80 \\

 $0.25$ &      6	& 9.53	 &1.94	& 12.00 \\

\hline
\end{tabular}
\end{minipage}

\noindent
\begin{minipage}{\linewidth}
\captionof{table}{Summary results for the Newsvendor Problem with \texttt{Binomial(10, 0.5)} and $R=100$.}\label{tab:newsBinom}
\centering
\begin{tabular}{ccccc}
\hline
$\theta$ & $y^*$ &   Avg. & St. Dev. &  Worst 10\% \\
\hline
 $0.00$ &       4 &       6.76 &       2.38 &      11.40 \\

 $0.05$ &        5 &       7.02 &       1.67 &      10.40 \\

 $0.10$ &        5 &       7.02 &       1.67 &      10.40 \\

 $0.15$ &        5 &       7.02 &       1.67 &      10.40 \\

  $0.20$ &        5 &       7.02 &       1.67 &      10.40 \\

 $0.25$ &        5 &       7.02 &       1.67 &      10.40 \\

\hline
\end{tabular}
\end{minipage}   

\noindent
\begin{minipage}{\linewidth}
\captionof{table}{Summary results for the Newsvendor Problem with \texttt{Poisson(5)} and $R=100$.}\label{tab:newsPoi}
\centering
\begin{tabular}{ccccc}
\hline
$\theta$ & $y^*$ &   Avg. & St. Dev. &  Worst 10\% \\
\hline
 $0.00$ &        4 &       7.59 &       3.04 &      13.60 \\

 $0.05$ &         5 &       7.73 &       2.40 &      12.60 \\

 $0.10$ &        5 &       7.73 &       2.40 &      12.60 \\

 $0.15$ &        5 &       7.73 &       2.40 &      12.60 \\

 $0.20$ &     6 &	8.44&	1.86&	12.00 \\

 $0.25$ &     6 &	8.44&	1.86&	12.00 \\

\hline
\end{tabular}
\end{minipage}   

In addition to the summary statistics, we also provide the box plots of the cost realizations in  Figures~\ref{fig:newsUnif}-\ref{fig:newsPoi}  for  Uniform, Binomial and Poisson distributions, respectively. 
We observe that as the robustness level~$\theta$ increases, the median of the cost realizations increases while the range shrinks for each distribution. We also note that the maximum  and upper quartile values decrease for $\theta \in [0.05,0.15]$. This is a desired property since it implies that the risk of the stochastic programming approach ($\theta=0.00$) can be lowered.

\noindent
\begin{minipage}{\linewidth}
\centering
\includegraphics[width=8cm,height=5cm]{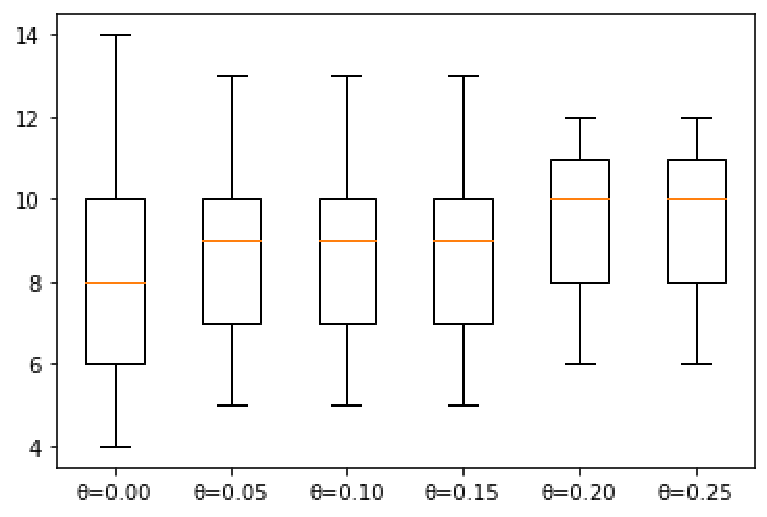}
\captionof{figure}{Box plot of the results for the Newsvendor Problem with \texttt{Uniform(0, 10)} and $R=100$.}\label{fig:newsUnif}
\end{minipage}

\noindent
\begin{minipage}{\linewidth}
\centering
\includegraphics[width=8cm,height=5cm]{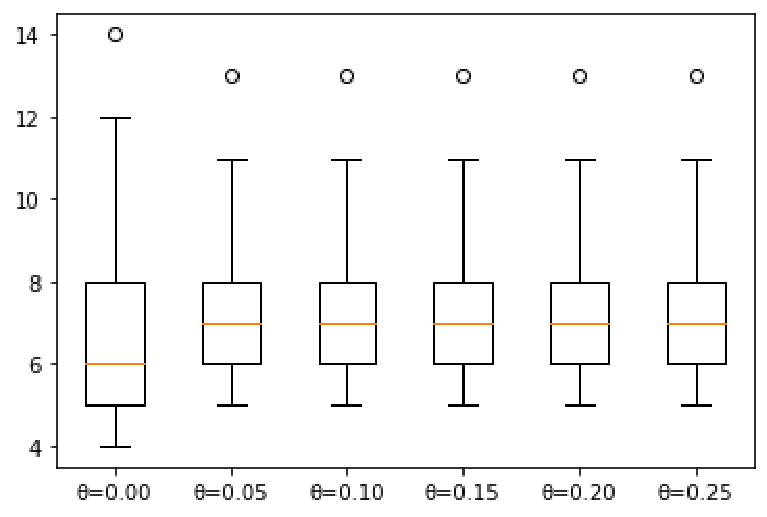}
\captionof{figure}{Box plot of the results for the Newsvendor Problem with \texttt{Binomial(10, 0.5)} and $R=100$.}\label{fig:newsBinom}
\end{minipage}

\noindent
\begin{minipage}{\linewidth}
\centering
\includegraphics[width=8cm,height=5cm]{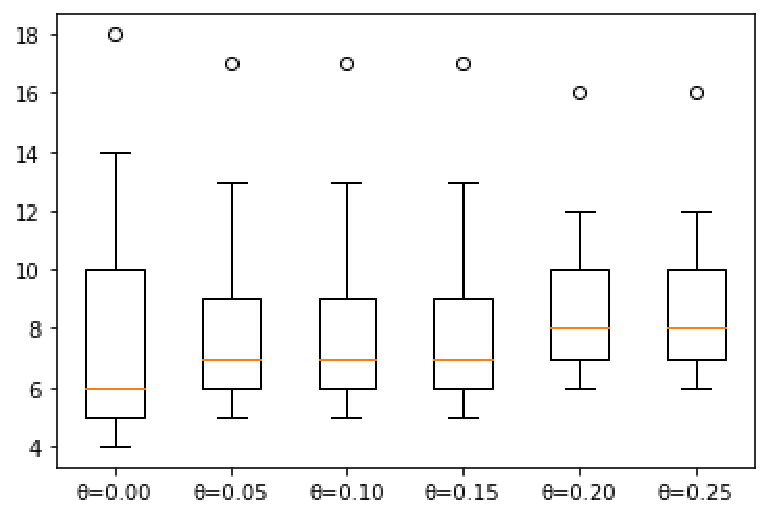}
\captionof{figure}{Box plot of the results for the Newsvendor Problem with \texttt{Poisson(5)} and $R=100$.}\label{fig:newsPoi}
\end{minipage}

\section{Application to the Uncapacitated Facility Location Problem}\label{sec:ufl}

In this section, we analyze  the KL divergence constrained DR version of the Uncapacitated Facility Location (UFL) Problem. 

\subsection{Deterministic Version}
We first remind the reader the deterministic version of the well-known UFL Problem. Suppose that we have $m$ customers each with demand $d^i$, $i\in[m]$. The demand must be satisfied by opening new facilities. There are $n$ potential facilities each with a fixed cost of $f_j$, $j\in[n]$. The unit transportation cost between each customer $i$ and facility $j$ is given as $t_{ij}$, $i\in[m]$, $j\in[n]$. The objective is to minimize the total fixed cost and transportation cost.

The UFL Problem can be modeled as an integer program by defining two sets of binary decision variables. The first set of decision variables, denoted as $y_j$, represent the status of each facility $j$, and the second set of decision variables, denoted as $x_{ij}$, represents the assignment of a customer $i$ to a facility $j$. The complete model is given as follows:
\begin{subequations} \label{eq:ufl}
\begin{align}
\min \ &  \sum_{j=1}^n  \bigg [ f_j y_j +  \sum_{i=1}^m d^i t_{ij} x_{ij} \bigg]  \label{eq:ufl0}\\
\text{s.t.}
\ & \sum_{j=1}^n   x_{ij} = 1    \hspace{1.25cm}  i\in [m]  \label{eq:ufl1} \\
\ & x_{ij}\le y_{j}  \hspace{1.8cm}  i\in [m]; j\in [n]  \label{eq:ufl2} \\
\ & x_{ij} \in\{0,1\}  \hspace{1.25cm}  i\in [m]; j\in [n]  \label{eq:ufl3}\\
\ &  y_{j}\in\{0,1\}  \hspace{1.38cm}  i\in [m]; j\in [n] . \label{eq:ufl4}
\end{align}
\end{subequations}
Here,  constraint~\eqref{eq:ufl1} guarantees that each customer is served by exactly one facility while constraint~\eqref{eq:ufl2} ensures that each customer is served by an open facility.

We point out two useful observations about the UFL Problem. Firstly,  in any feasible solution to problem~\eqref{eq:ufl}, at least one facility must be opened, hence we must have
\begin{equation}\label{eq:valid_y}
\sum_{j=1}^n  y_j  \ge 1.
\end{equation}
Secondly, given any optimal~$y^*$ vector, the optimal objective function value can be computed as 
\begin{equation}\label{eq:opt_ufl_given_y}
 \sum_{j=1}^n  f_j y_j^* +  \sum_{i=1}^m d^i   \min_{j: y_j^* = 1} \{t_{ij} \}  ,
\end{equation}
since each customer can be served by the closest open facility.

\subsection{Distributionally Robust Version}
Now, suppose that we replace the deterministic demand $d^i$ with a random variable $\xi^i$ having an empirical distribution $q^i \in \ri(\Delta^{S_i})$, with realizations $d_s^i$, $s\in[S_i]$. Then, the DR version of the UFL Problem can be modeled as an instance of the generic model~\eqref{eq:generic} with $\mathcal{Y} :=\{ y\in\{0,1\}^n : \eqref{eq:valid_y} \} $ as follows: We choose functions
\[
h(y) =  \sum_{j=1}^n  f_j y_j  
\]
and
\[
H^i(y, \xi^i) = \xi^i \min_{j: y_j =1} \{ t_{ij}\}, \ i=1,\dots,m,
\]
due to~\eqref{eq:opt_ufl_given_y}. 
In the remainder of this subsection, we will obtain the robust counterpart of the KL divergence constrained DR UFL Problem as a dual exponential cone constrained MIP by the help of Lemma~\ref{lem:ufl}.

\subsubsection{A Lemma}

The following lemma will be critical to linearize constraint~\eqref{eq:genericReform2}.

\begin{lemma}\label{lem:ufl}
Let $t \in \mathbb{R}_+^n$ be a given vector and consider the function $g(y) : \{0,1\}^n \to \mathbb{R}$ defined as $g(y) := \min\{ t_j: y_j=1 \}$. Then, for any $y\in\{0,1\}^n$ satisfying~\eqref{eq:valid_y},  we have
\begin{equation}\label{eq:ufl closed}
g(y) =
\max_{l=1,\dots,n} \big\{ t_l - \sum_{j=1}^n y_j \max\{t_l-t_j,0\}  \big\} .
\end{equation}
\end{lemma}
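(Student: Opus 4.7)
The plan is to prove the identity by a direct case analysis, exploiting the fact that the outer maximum on the right-hand side is attained at the index that minimizes $t_j$ over the open facilities.

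First I would fix $y \in \{0,1\}^n$ satisfying \eqref{eq:valid_y} and choose an index $j^\star \in \argmin\{t_j : y_j = 1\}$, which is well-defined because at least one component of $y$ equals one. Then $g(y) = t_{j^\star}$ by definition. Writing $F_l(y) := t_l - \sum_{j=1}^n y_j \max\{t_l - t_j, 0\}$ for the inner expression, the goal becomes showing $\max_l F_l(y) = t_{j^\star}$.

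Next I would establish that this maximum is attained at $l = j^\star$. For every $j$ with $y_j = 1$, we have $t_j \ge t_{j^\star}$ by the choice of $j^\star$, so $\max\{t_{j^\star} - t_j, 0\} = 0$; for every $j$ with $y_j = 0$ the term is killed by the multiplier. Hence $F_{j^\star}(y) = t_{j^\star}$, which gives the lower bound $\max_l F_l(y) \ge t_{j^\star}$.

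The matching upper bound $F_l(y) \le t_{j^\star}$ for every $l$ splits into two subcases. If $t_l \le t_{j^\star}$, then already $F_l(y) \le t_l \le t_{j^\star}$ since the subtracted sum is nonnegative. If instead $t_l > t_{j^\star}$, then since $y_{j^\star} = 1$ the sum contains the term $\max\{t_l - t_{j^\star}, 0\} = t_l - t_{j^\star}$, so
\[
F_l(y) \le t_l - y_{j^\star} \max\{t_l - t_{j^\star}, 0\} = t_l - (t_l - t_{j^\star}) = t_{j^\star}.
\]
Combining the two bounds yields $\max_l F_l(y) = t_{j^\star} = g(y)$, which is \eqref{eq:ufl closed}.

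I do not anticipate a genuine obstacle here: the identity is essentially a tropical/min-plus reformulation, and the only subtlety is that the nonnegativity $t \in \mathbb{R}_+^n$ together with \eqref{eq:valid_y} are needed to guarantee that $j^\star$ exists and that the clipped differences behave as expected. The structural payoff of the lemma, which I would mention in passing but not prove, is that $g(y)$ is expressed as the pointwise maximum of finitely many \emph{linear} functions of $y$, so it is polyhedrally representable and the product $\xi^i g(y)$ appearing in $H^i(y,\xi^i)$ can be handled by the linearization machinery needed for constraints \eqref{eq:genericReform1}--\eqref{eq:genericReform2}.
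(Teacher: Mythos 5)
Your proof is correct and follows essentially the same route as the paper's: identify a minimizer $j^\star$ over the open facilities, show the expression attains $t_{j^\star}$ at $l=j^\star$, and bound all other indices $l$ by $t_{j^\star}$. Your subcase split on whether $t_l \le t_{j^\star}$ or $t_l > t_{j^\star}$ is, if anything, a slightly cleaner way of organizing the paper's Case~2, so no gap to report.
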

\begin{proof}
Let $y\in\{0,1\}^n$ satisfying~\eqref{eq:valid_y} be given. We  define the following nonempty sets $T:=\{j: y_j =1\} $ and $T^* := \argmin\{t_j : j \in T\}$.
Notice that $g(y) = t_{l}$ for $l \in T^*$.  Also, let us define the quantity
\[
z_l :=  t_l - \sum_{j=1}^n y_j \max\{t_l-t_j,0\}, \ l\in[n],
\]
for convenience.
Notice that we have
\[
\begin{split}
y_j\max\{t_l - & t_j,  0\} =  \\
&
\begin{cases}
0 & \text{ if } j\not\in T\\
0  &  \text{ if } j\in T \text{ and } t_{il} \le t_{ij} \\
 t_{il} - t_{ij}  & \text{ if } j\in T \text{ and } t_{il} > t_{ij}
  \end{cases}.
  \end{split}
\]
This observation helps us to rewrite $z_l$ as
\[
z_l = t_l - \sum_{j\in T :  t_{l} > t_{j}  } (t_l-t_j), \ l\in[n].
\]

Now, we will look at the following cases to compute or bound $z_l$: 

\noindent Case 1: Let  $l^*\in T^*$. Then,  we have $z_{l^*} = t_{l^*}$.

\noindent Case 2: Let  $l\not\in T^*$, and choose any $j^*\in T^*$. Then, we have
\begin{equation*}
\begin{split}
z_l & = t_l - (t_l - t_{j^*}) - \sum_{j\in T\setminus\{j^*\} :  t_{il} > t_{ij}  } (t_l-t_j) \\
& = t_{j^*} - \sum_{j\in T\setminus\{j^*\} :  t_{il} > t_{ij}  } (t_l-t_j)  \\
& \le t_{j^*}.
\end{split}
\end{equation*}
This analysis indicates that 
\[
\max_{l \in [n]} \big\{ t_l - \sum_{j=1}^n y_j \max\{t_l-t_j,0\} \big \} = \max_{l \in [n] } z_l =   t_{l^*},
\]
where $l^* \in T^*$. Hence, we conclude that equation~\eqref{eq:ufl closed} holds true.
\end{proof}
An alternative proof of Lemma~\ref{lem:ufl} can be obtained via LP duality: First, one would write the problem $ \min\{ t_j: y_j=1 \}$ as an IP by introducing additional binary variables $x_j$. Secondly, this IP can be relaxed as an LP due to the totally unimodular structure. Then, the extreme points of the feasible region  of the dual LP can be characterized, enabling the dual LP to be solved in closed form (see dual based arguments in \cite{Erlenkotter1978, Conn1990}).

\subsubsection{The Final Formulation}
Taking into account the special structure of the UFL Problem and utilizing Lemma~\ref{lem:ufl} by setting $g := H^i$ for each $i\in[m]$, we obtain the following dual exponential cone constrained MIP:
 \begin{subequations} \label{eq:uflReform}
\begin{align}
\min \ &  \sum_{j=1}^n  f_j y_j + \sum_{i=1}^m \bigg [ \alpha^i + \epsilon^i \beta^i + \sum_{s=1}^{S_i} q_s^{i} u_s^i \bigg ]  \\
\text{s.t.}
\ & \alpha^{i}  - v_s^i \ge  d_s^{i} \big ( t_{il} - \sum_{j=1}^n  y_j \max\{t_{il}-t_{ij},0\} \big ) \notag \\
\ &  \hspace{2.15cm}  i \in [m]; s \in [S_i]; l \in[n]   \\
\ &   \eqref{eq:genericReform2}-\eqref{eq:genericReform4},  \ \eqref{eq:ufl4}, \ \eqref{eq:valid_y} \notag .
\end{align}
\end{subequations}

\subsection{Computations}

\subsubsection{Experimental Setup}
We utilize Algorithm~\ref{alg:ufl} to compare the effect of robustness level of KL divergence constrained DR version of the UFL Problem. This algorithm is 	quite similar to Algorithm~\ref{alg:news} used for the analysis of the Newsvendor Problem.

\begin{algorithm}
\label{alg:ufl}
\begin{algorithmic}[1]
\REQUIRE A probability distribution $\mathcal{D}$, the number of samples $R$, the set of robustness levels  $\mathcal{T}$.
\STATE Sample $R$  random variates from $\mathcal{D}$ for each customer $i\in[m]$ for training, and obtain the empirical distribution $q^i$ and the maximum KL divergence $\overline\epsilon(q^i)$ for each $i\in[m]$.
\STATE Solve problem~\eqref{eq:uflReform} with  $\epsilon^i := \theta \overline\epsilon(q^i)$ for each $i\in[m]$ and $\theta \in \mathcal{T}$ to obtain a decision vector $y^*(\theta)$. 
\STATE Sample $R$  random variates from $\mathcal{D}$ for each $i\in[m]$ for testing, and then compute the cost realizations for each realization under the decision vector $y^*(\theta)$. 
\end{algorithmic}
\end{algorithm}

\subsubsection{Results}
\label{sec:uflResults}

For this illustration, we assume that there are 12 customers  and three potential facilities located in the unit interval. Their precise locations are given as 
\[
\bigg\{ \frac{2h-1}{36}: h\in[6 ] \bigg \} \cup  \bigg\{\frac{35-2h}{36}: h\in[6] \bigg \},
\]
and  
\[ 
\bigg\{\frac{2H-1}{6}: H\in[3]  \bigg\},
\] 
respectively, and are shown in Figure~\ref{fig:locations}.

\medskip
\noindent
\begin{minipage}{\linewidth}
\centering
\includegraphics[width=8.2cm,height=1.025cm]{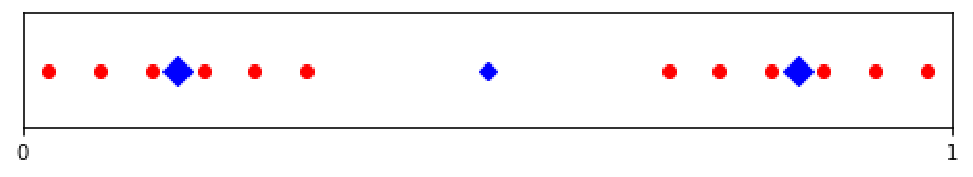}
\captionof{figure}{Locations of the customers (circles) and potential facilities (diamonds).}\label{fig:locations}
\end{minipage}
As evident from the figure, potential facilities are located evenly across the unit interval and there are two clusters of customers which are also distributed evenly in their respective regions.
The fixed cost of opening facilities are given as $f_1=f_3=10$ for the two facilities in the middle of these clusters (marked by a large diamond), and $f_2=5$ for the other facility (marked by a small diamond). Finally, the unit transportation cost between a facility-customer pair is assumed to be equal to the their distance from each other. 

We specify the parameters of Algorithm~\ref{alg:ufl} regarding the generation of random variates similar to that of  Algorithm~\ref{alg:news} as described in Section~\ref{sec:newsResults}.

The summary statistics of our experiments are reported in Tables~\ref{tab:uflUnif}-\ref{tab:uflPoi}  for  Uniform, Binomial and Poisson distributions, respectively. We first observe that the optimal solutions and the performance measures are similar for every distribution, therefore, we will summarize our observations together. 
Due to the choice of parameters and the locations of the facilities and customers as can be seen from Figure~\ref{fig:locations}, there is a fundamental trade-off in this instance: We can i) either open a single facility at the middle of the line segment with the lower fixed cost and serve customers via longer distances,  ii) or open two facilities at the middle of two customer clusters with higher  fixed cost and serve customers via shorter distances. In the stochastic programming approach ($\theta=0.00$), the first  policy becomes optimal whereas in the DR approach ($\theta \ge 0.05$), the second policy becomes optimal. 
We note that considering the ambiguity of the demand distributions increases the average cost only slightly whereas  both the standard deviation and the average of the worst 10\% of the realizations decrease significantly. We remind the reader that the total fixed cost of the stochastic programming approach is only 5 while the fixed cost of the DR approach is 20. This also shows that the corresponding transportation cost, which is affected by the random uncertainty, is significantly smaller in the DR approach.

\noindent
\begin{minipage}{\linewidth}
\captionof{table}{Summary results for the UFL Problem with \texttt{Uniform(0, 10)} and $R=100$.}\label{tab:uflUnif}
\centering
\begin{tabular}{ccccc}
\hline
$\theta$ & $y^*$ &   Avg. & St. Dev. &  Worst 10\% \\
\hline
   $0.00$ & 0,1,0 &      23.11 &       3.47 &      29.19 \\

  $0.05$ & 1,0,1 &      24.52 &       0.92 &      26.10 \\

  $0.10$ & 1,0,1 &      24.52 &       0.92 &      26.10 \\

 $0.15$ & 1,0,1 &      24.52 &       0.92 &      26.10 \\

  $0.20$ & 1,0,1 &      24.52 &       0.92 &      26.10 \\

 $0.25$ & 1,0,1 &      24.52 &       0.92 &      26.10 \\

\hline
\end{tabular}
\end{minipage}

\noindent
\begin{minipage}{\linewidth}
\captionof{table}{Summary results for the UFL Problem with \texttt{Binomial(10, 0.5)} and $R=100$.}\label{tab:uflBinom}
\centering
\begin{tabular}{ccccc}
\hline
$\theta$ & $y^*$ &   Avg. & St. Dev. &  Worst 10\% \\
\hline
     $0.00$ & 0,1,0 &      24.87 &       1.88 &      28.15 \\

     $0.05$ & 1,0,1 &      24.97 &       0.52 &      25.86 \\

    $0.10$ &1,0,1 &      24.97 &       0.52 &      25.86 \\

    $0.15$ & 1,0,1&      24.97 &       0.52 &      25.86 \\

   $0.20$ & 1,0,1&      24.97 &       0.52 &      25.86 \\

  $0.25$ & 1,0,1 &      24.97 &       0.52 &      25.86 \\

\hline
\end{tabular}
\end{minipage}   

\noindent
\begin{minipage}{\linewidth}
\captionof{table}{Summary results for the UFL Problem with \texttt{Poisson(5)} and $R=100$.}\label{tab:uflPoi}
\centering
\begin{tabular}{ccccc}
\hline
$\theta$ & $y^*$ &   Avg. & St. Dev. &  Worst 10\% \\
\hline
       $0.00$ & 0,1,0 &      24.96 &       2.67 &      29.89 \\

       $0.05$ & 1,0,1&      24.99 &       0.74 &      26.34 \\

      $0.10$ & 1,0,1 &      24.99 &       0.74 &      26.34 \\

      $0.15$ & 1,0,1 &      24.99 &       0.74 &      26.34 \\

       $0.20$ &1,0,1&      24.99 &       0.74 &      26.34 \\

       $0.25$ &1,0,1&      24.99 &       0.74 &      26.34 \\

\hline
\end{tabular}
\end{minipage}

In addition to the summary statistics, we also provide the box plots of the cost realizations in  Figures~\ref{fig:uflUnif}-\ref{fig:uflPoi}  for  Uniform, Binomial and Poisson distributions, respectively. 
We observe that  the median of the cost realizations either stays the same or increases slightly in the DR approach while the range shrinks significantly compared to the stochastic programming approach for each distribution. We also note that the maximum  and upper quartile values decrease considerably with the DR approach (especially for Binomial and Poisson distributions).

\noindent
\begin{minipage}{\linewidth}
\centering
\includegraphics[width=8cm,height=5cm]{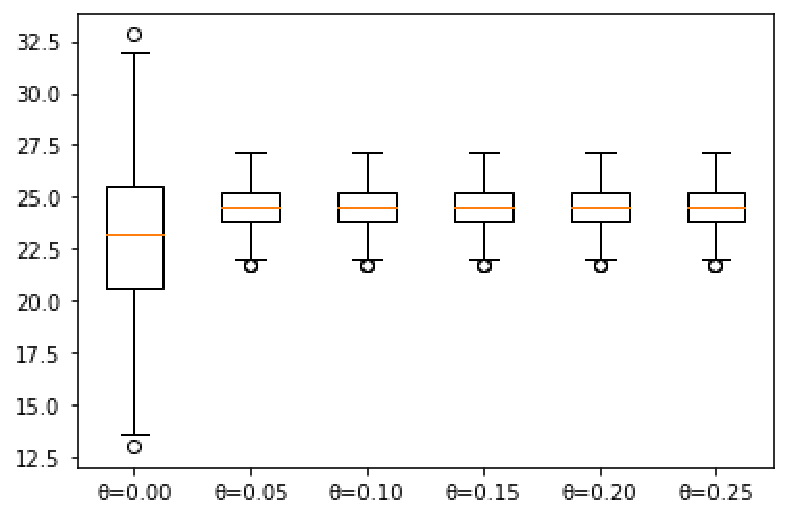}
\captionof{figure}{Box plot of the results for the UFL Problem with \texttt{Uniform(0, 10)} and $R=100$.}\label{fig:uflUnif}
\end{minipage}

\noindent
\begin{minipage}{\linewidth}
\centering
\includegraphics[width=8cm,height=5cm]{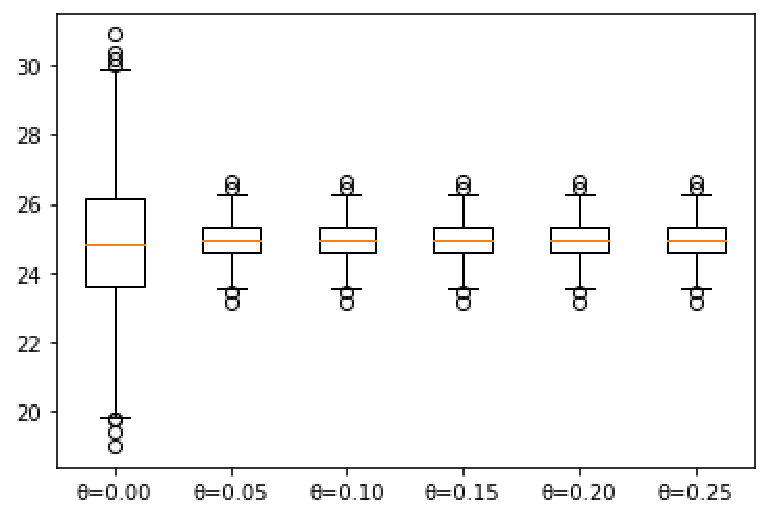}
\captionof{figure}{Box plot of the results for the UFL Problem with \texttt{Binomial(10, 0.5)} and $R=100$.}\label{fig:uflBinom}
\end{minipage}

\noindent
\begin{minipage}{\linewidth}
\centering
\includegraphics[width=8cm,height=5cm]{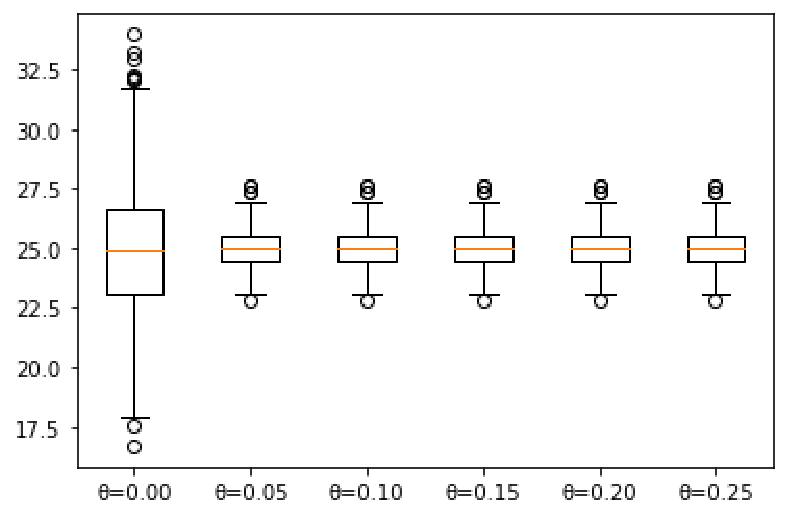}
\captionof{figure}{Box plot of the results for the UFL Problem with \texttt{Poisson(5)} and $R=100$.}\label{fig:uflPoi}
\end{minipage}

\section{Conclusion}
\label{sec:conc}

In this paper, we analyzed the KL divergence constrained DRO problems and proposed their dual exponential cone constrained reformulations utilizing the exponential cone representability property of KL divergence  and Conic Duality. The resulting robust counterpart can be solved by a commercial conic programming solver directly. We specialized our results to the Newsvendor and UFL Problems by providing problem specific reformulations, and conducted a computational analysis comparing the performance of the solutions obtained via DR approach and stochastic programming from different aspects. 
We observed that although the mean and median of the cost realizations deteriorate slightly when the DR approach is preferred; the range, standard deviation and worst case values of the cost realizations improve significantly compared to stochastic programming approach. 

Two future research directions seem promising: We would like to test the success of the proposed method on different problems with real datasets, and adapt our results to the decision-dependent setting, which is a recent active research area in the  DRO literature \cite{Noyan2018, Basciftci2019, Luo2020}.

\section*{Acknowledgments}
\noindent The author would like to thank Dr. Beste Basciftci for her comments on an earlier version of this paper.


\vspace{1.5cm}

\small
\noindent
\textit{\textbf{Burak Kocuk}
is an assistant professor at the Industrial Engineering Program, Sabanc{\i} University. He obtained his BS degrees in Industrial Engineering and Mathematics, and MS degree in Industrial Engineering from Bo\u{g}azi\c{c}i University. He obtained his PhD degree of Operations Research at the School of Industrial and Systems Engineering, Georgia Institute of Technology. Before joining Sabanc{\i} University, he was a postdoctoral fellow at the Tepper School of Business, Carnegie Mellon University. His current research focuses on mixed-integer nonlinear programming  and stochastic optimization problems, from both theoretical and methodological aspects.
}\\
\includegraphics[height=10pt, width=10pt]{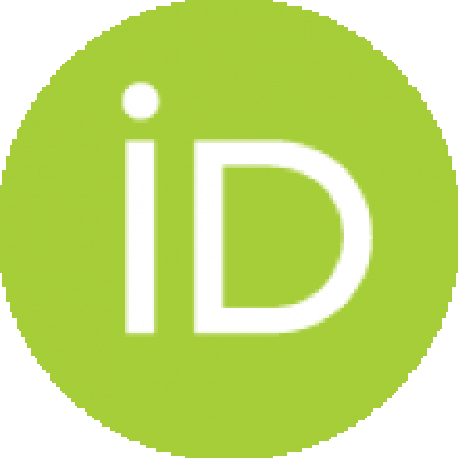} \url{http://orcid.org/0000-0002-4218-1116}

\end{multicols*}
  
\end{document}